\long\def\symbolfootnote[#1]#2{\begingroup
	\def\thefootnote{\fnsymbol{footnote}}\footnote[#1]{#2}\endgroup}
\newtheorem{theorem}{Theorem}[section]
\newtheorem{lemma}[theorem]{Lemma}
\newtheorem{thm}[theorem]{Theorem}
\newtheorem{prop}[theorem]{Proposition}
\theoremstyle{definition}
\newtheorem{rem}[theorem]{Remark}
\newtheorem{defin}[theorem]{Definition}
\newcommand{\N}{\mathbb{N}}
\newcommand{\W}{\mathcal{W}}
\newcommand{\A}{\mathcal{A}}
\newcommand{\V}{\mathcal{V}}
\newcommand{\LL}{\mathcal{L}}
\newcommand{\id}{\text{id}}
\newcommand{\vr}{\nu}
\begin{document}
	
	\title{Garside shadows and biautomatic structures in Coxeter groups}
	
	\author[F.~Dos Santos]{Fabricio Dos Santos}

    \address{
		Department of Mathematics and Statistics,
		McGill University,
		Burnside Hall,
		805 Sherbrooke Street West,
		Montreal, QC,
		H3A 0B9, Canada}
	\email{fabricio.dossantos@mail.mcgill.ca}
	
	\maketitle
	
	\begin{abstract}
		\noindent In 2022, Osajda and Przytycki showed that any Coxeter group $W$ is biautomatic. Key to their proof is the notion of voracious projection of an element $g \in W$, which is used iteratively to construct a biautomatic structure for~$W$: the voracious language. In this article, we generalize these two notions by defining them for any Garside shadow $B$ in a Coxeter system $(W,S)$. This leads to the result that any finite Garside shadow in $(W,S)$ can be used to construct a biautomatic structure for $W$. In addition, we show that for the Garside shadow~$L$ of low elements, the biautomatic structure obtained corresponds to the original voracious language of Osajda and Przytycki. These results answer a question of Hohlweg and Parkinson. 
	\end{abstract}

\section{Introduction}

Coxeter groups are widely studied in mathematics as abstractions of reflection groups. They were introduced by H.S.M.\ Coxeter in 1934 \cite{Coxeter-1934}, and since then many of their important algebraic, geometric and algorithmic properties have been established. One of these properties is biautomaticity, which was introduced in the book \cite{wordprocessing} and roughly says that there is a ``well behaved'' system of paths in the Cayley graph of the group (see Section \ref{section Biau} for a rigorous definition). In particular, biautomaticity is strongly linked to algorithmic properties of a group and provides a good setting for studying its structure. 

A slightly weaker version of this property was established for Coxeter groups in the 1990s. Namely, Davis and Shapiro proved, under the assumption of the Parallel Wall Theorem, that Coxeter groups are automatic \cite{DS-au}. The result was then completed by Brink and Howlett with their proof of the Parallel Wall Theorem \cite{BH-par}. The problem of proving that Coxeter groups are biautomatic, however, remained open for much longer. Partial results were obtained in multiple subclasses of Coxeter groups, namely Euclidean and Gromov hyperbolic \cite{wordprocessing}, right-angled \cite{NR-bi, NR-cox}, no Euclidean reflection triangles \cite{Ba-bi, CM-bi}, relatively hyperbolic \cite{Ca-bi} and 2-dimensional \cite{MOP-bi}. Finally, in 2022, Osajda and Przytycki proved the result for a general Coxeter group $W$ \cite{OP-bi}.

Crucial to the proof of biautomaticity by Osajda and Przytycki is the notion of \emph{voracious projection} $p(g)$ of an element $g \in W$, which is used iteratively to construct a biautomatic structure for $W$, namely the \emph{voracious language} $\V$ \cite{OP-bi}. In this paper, we study and generalize these two notions, leading to a new class of biautomatic structures for the Coxeter group $W$. In particular, we define a voracious projection $\vr_B$ and voracious language $\V_B$ for any Garside shadow $B$ in a Coxeter system $(W,S)$, that is, a subset $B \subseteq W$ containing $S$ which is closed under taking suffixes and joins (in the right weak order), introduced in \cite{DH-Garside}. These definitions are motivated by an observation of Hohlweg and Parkinson, who reinterpreted the voracious projection $p(g)$ as the $g$-translate of a particular element of the set $L$ of \emph{low elements}, which is a finite Garside shadow in $(W,S)$. The main result of this paper is the following, answering in the positive a question of Hohlweg and Parkinson.  

\begin{thm} \label{mainthm}
Let $B$ be a finite Garside shadow in $(W,S)$. Then $(S,\V_B)$ is a biautomatic structure for $W$.
\end{thm}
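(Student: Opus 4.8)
The plan is to verify the standard combinatorial criterion for biautomaticity from \cite{wordprocessing}: that the language $\V_B$ is regular, that its evaluation map onto $W$ is surjective, and that it enjoys both the right and the left \emph{fellow traveller property}, i.e.\ that there is a constant $k$ so that whenever $u,v \in \V_B$ evaluate to elements $\overline{u},\overline{v}$ with $\overline{u}^{-1}\overline{v} \in S \cup \{1\}$ (for the right property) or $\overline{u}\,\overline{v}^{-1} \in S \cup \{1\}$ (for the left property), the two paths they trace in the Cayley graph stay within distance $k$ of each other at every time step. A useful preliminary observation is that every voracious word is geodesic: each application of $\vr_B$ peels off a single block lying in $B$ and drops the length by exactly the length of that block (this block exists and lies in $B$ precisely because $B$ is closed under suffixes and joins, so the relevant greedy element of $B$ is well defined), whence concatenating reduced words for the successive blocks produces a geodesic representative. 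Surjectivity then follows at once: starting from any $g$ and iterating $\vr_B$, the length strictly decreases at each step since $S \subseteq B$ forces a nontrivial block whenever $g \neq 1$, so the procedure terminates at the identity after finitely many steps and reads off a word in $\V_B$ representing $g$. Moreover $\vr_B$ is deterministic, so each $g$ receives a canonical decomposition into $B$-blocks, and the fellow traveller conditions reduce to comparing these decompositions for neighbouring group elements.

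For regularity I would exploit the finiteness of $B$. The block that $\vr_B$ extracts from $g$ is determined by the maximal element of $B$ occurring as the appropriate suffix of $g$, and this is a piece of bounded local data about $g$ near the identity, exactly the kind of information a finite-state automaton can track. Concretely, one builds an automaton whose states record the finitely many ways the incoming data can sit relative to $B$, refining the canonical automaton of small roots of Brink--Howlett \cite{BH-par}; a word is accepted exactly when each of its successive $B$-blocks is the greedy one dictated by $\vr_B$. The finiteness of $B$ bounds the look-ahead required and hence the number of states.

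The hard part will be the two fellow traveller properties, which is where the geometry of $(W,S)$ enters. The crux is a \emph{stability} estimate for the voracious projection: if $d(g,g')$ is bounded---in particular if $g' = gs$ or $g' = sg$ for some $s \in S$---then $d(\vr_B(g), \vr_B(g'))$ is bounded by a constant depending only on $(W,S)$ and $B$, and, decisively, this bound does \emph{not} accumulate as one iterates the projection. Granting such an estimate, one obtains both fellow traveller properties by inducting along the projection sequences of $\overline{u}$ and $\overline{v}$: the two streams of extracted blocks remain uniformly close, so the concatenated geodesic paths stay within a uniform distance. The main obstacle is therefore establishing this non-accumulating stability of $\vr_B$ for an arbitrary finite Garside shadow $B$. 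For the shadow $L$ of low elements this is what Osajda and Przytycki control in \cite{OP-bi}, where the Parallel Wall Theorem is used to bound the depth of the relevant inversions; here the challenge is to reprove the estimate using only the abstract closure axioms (under suffixes and joins) defining a Garside shadow, replacing the specific combinatorics of low elements with the structural properties of $B$, so that the wall geometry of the Coxeter complex still forces the greedy $B$-block to vary boundedly under a bounded perturbation of the input.
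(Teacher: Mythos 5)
Your plan has the same skeleton as the paper's proof (geodesicity and surjectivity of $\V_B$, an automaton built from $B$, then the two fellow traveller properties), and your observations on geodesicity and surjectivity are correct. But the proposal stops exactly where the proof has to start: both fellow traveller properties are reduced to a ``non-accumulating stability estimate'' for $\vr_B$ that you explicitly leave as the main obstacle, with no argument given for it. This is a genuine gap, and moreover the estimate as you phrase it is not what the paper proves. For the right-multiplication property the paper does not bound $d(\vr_B(g),\vr_B(gs))$ and iterate; instead it proves an order-theoretic monotonicity lemma (Lemma \ref{v-ineq-lemma}): if $\vr_B(g) \preceq g' \preceq g$ then $\vr_B(g') \preceq \vr_B(g)$, whose proof uses precisely the suffix-closure of $B$. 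Applied iteratively to $g' = gs$ with $\ell(g')<\ell(g)$ (note $\vr_B(g)\preceq g'\preceq g$ holds because $s \in S \subseteq B$), this interleaves the two projection sequences,
\begin{align*}
\id = \vr_B^n(g') \preceq \vr_B^n(g) \preceq \cdots \preceq \vr_B(g') \preceq \vr_B(g) \preceq g' \preceq g,
\end{align*}
and then the bound $d(\vr_B(h),h)\le M := \max\{\ell(b):b\in B\}$ gives the constant $2M$ with nothing to accumulate. The interleaving is what replaces your hoped-for stability estimate, and it is an order-theoretic fact, not a metric one.

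For the left-multiplication property the missing idea is the mechanism that lets wall geometry act on an \emph{abstract} finite $B$: the paper shows $B \subseteq L_M$ (the $M$-low elements, with $M$ as above), proves a refinement lemma (if $B\subseteq B'$ and $\pi_{B'}(x)=\pi_{B'}(y)$ then $\pi_B(x)=\pi_B(y)$), and invokes Fu's theorem that the set of $M$-elementary walls is finite, i.e.\ a generalized Parallel Wall Theorem with constant $Q_M$. The induction on $\ell(g)$ then splits into two cases: if $\pi_B(g^{-1})=\pi_B((sg)^{-1})$ one has the exact identity $s\vr_B(g)=\vr_B(sg)$, so nothing accumulates; otherwise the wall $\W_s$ must be $M$-close to $sg$ (else, by $B\subseteq L_M$ and the refinement lemma, the two projections would coincide), whence $d(\vr_B(g),\vr_B(sg))\le 2(M+Q_M)$, and one finishes by repeatedly applying the already-proved right-hand property rather than iterating any estimate. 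You name the Parallel Wall Theorem but give no way of transferring it from $L$ to $B$; the bridge $B\subseteq L_M$ plus the refinement lemma is that transfer, and without it the left fellow traveller property remains unproved. (Your regularity sketch is also thinner than needed---the paper's automaton has vertex set $B$ with an edge $b\to w$ whenever $\pi_B(wb)=w$, and its correctness proof uses that the cone-type shadow $\Gamma$ is contained in every Garside shadow---but that gap is minor compared with the fellow traveller ones.)
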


We also show in Proposition \ref{vorOG} that the original voracious projection $p$ and voracious language $\V$ of Osajda and Przytycki are obtained from $\vr_B$ and $\V_B$ as a special case when $B = L$. 

\medskip

\noindent \textbf{Organization.} We start by recalling in Section \ref{prelim} definitions related to Coxeter groups, walls and the right weak order. In Section \ref{section Garside}, we define Garside shadows and their associated projections, and give examples and properties relevant to our result. Then, in Section \ref{section Biau}, we define biautomatic structures, and introduce the notion of voracious projection $\vr_B$ and voracious language $\V_B$ of a Garside shadow $B$. We proceed by defining regular languages in Section \ref{section regularity} and showing that $\V_B$ is regular when $B$ is finite. Finally, we complete the proof of Theorem \ref{mainthm} in Section \ref{section FTP} by proving the fellow traveller property for $\V_B$ when $B$ is finite.  

\medskip

\noindent \textbf{Acknowledgments.} We express our sincere gratitude to Piotr Przytycki for his invaluable guidance, support, and numerous suggestions throughout the development of this work. We also thank Christophe Hohlweg for enlightening discussions, and the anonymous referee for useful comments on a first version of this article.

\section{Preliminaries} \label{prelim}

\subsection{Coxeter groups and words} \label{Coxeter groups and words}

Let $S$ be a finite set and $W$ be a group generated by $S$ subject only to relations $s^2=\id$ for $s \in S$ and $(st)^{m_{st}} = \id$ for $s \neq t \in S$, where $m_{st} = m_{ts} \in \{2,3,\ldots,\infty\}$, and $m_{st} = \infty$ means that we do not impose a relation between $s$ and $t$. Such a pair $(W,S)$ is called a \emph{Coxeter system} and $W$ is called a \emph{Coxeter group}. The \emph{word length} of an element $g \in W$, denoted $\ell(g)$, is the minimal number $n\geq 0$ such that we can write $g = s_1\ldots s_n$ where each $s_i \in S$ for $i = 1, \ldots, n$. The \emph{word metric} on $W$ is the function $d : W \times W \to \N$ defined by $d(g,h) = \ell(g^{-1}h)$. For $g,w \in W$, we say that $w$ is a \emph{suffix} of $g$ if there exists $u \in W$ such that $g=uw$ and $\ell(g) = \ell(u) + \ell(w)$.

Denote by $S^*$ the set of all words over $S$. For $v \in S^*$ a word of length $n$, write $v(i)$ for the prefix of $v$ (that is, initial subword of $v$) of length $1 \leq i \leq n-1$ and $v$ itself for $i \geq n$. For $1 \leq i \leq j \leq n$ let $v(i,j)$ denote the subword of $v(j)$ obtained by removing the prefix $v(i-1)$. Finally, for a word $v \in S^*$ let $\overline{v} \in W$ be the element of the group $W$ represented by $v$. 

\subsection{Reflections and walls} \label{Reflections and walls}

Let $X^1$ be the \emph{Cayley graph} of $W$ with respect to $S$, that is the graph with vertex set $X^0 = W$ and edges of length 1 joining each $g \in W$ to $gs$ for $s \in S$. We consider the action of $W$ on $X^0$ by left-multiplication, which induces an action of $W$ on $X^1$. A \emph{reflection} of $W$ is an element of the set $R = \bigcup_{g\in W}gSg^{-1}$. Given $r \in R$, the \emph{wall} $\W_r$ of $r$ is the fixed point set of $r$ on $X^1$. In particular, for a wall $\W_r$ in $X^1$ such $r \in R$ is unique. Every wall $\W$ separates $X^1$ into two components, called \emph{half-spaces}. We say that two walls $\W$ and $\W'$ \emph{intersect} if $\W'$ is not contained in a half-space of $\W$ (this relation is symmetric). Furthermore, we say a wall $\W$ \emph{separates} an element $g \in W$ (or a wall $\W'$) from another element $h \in W$ (or wall $\W''$) if $g$ (or $\W$) belongs to a different half-space of $\W$ than $h$ (or $\W''$). It is proven in \cite[Lem 2.5]{R-buildings} that a geodesic edge-path in $X^1$ intersects $\W$ at most once, thus, for $g,h \in W$, we have that $d(g,h)$ equals the number of walls in $X^1$ separating $g$ and $h$. 

\subsection{The right weak order} \label{The right weak order}

We will be considering the \emph{right weak order} $\preceq$ on $W$. This partial order is defined by setting $g \preceq h$ for $g,h \in W$ if $g$ lies on a geodesic in $X^1$ from $\id$ to $h$. Equivalently, $g \preceq h$ if there is no wall in $X^1$ separating $g$ from both $\id$ and $h$. It is known that $(W, \preceq)$ is a complete meet-semilattice, that is, any subset $A$ of $W$ has a greatest lower bound $\bigwedge A$ called the \emph{meet} of $A$ (see \cite[Thm 3.2.1]{B-meet}). This implies that any bounded subset $A$ of $W$ also has a least upper bound $\bigvee A$ called the \emph{join} of $A$.

\section{Garside shadows} \label{section Garside}

\subsection{Garside shadows and the $B$-projection}

Garside shadows were introduced by Dyer and Hohlweg in \cite{DH-Garside} as analogs to the notion of Garside families in a monoid (see \cite{D-monoid, DDH-monoid}). They were used in \cite{HNW-Gar} to construct automata recognizing the language of reduced words of $(W,S)$.

\begin{defin}
A \emph{Garside shadow} in $(W,S)$ is a subset $B \subseteq W$ containing $S$ such that:
\begin{enumerate}
    \item \label{join-closed} for any $X \subseteq B$, if $\bigvee X $ exists then $\bigvee X \in B$;
    \item \label{suffix-closed} for any $b \in B$, if $w$ is a suffix of $b$ then $w \in B$.
\end{enumerate}
Condition (\ref{join-closed}) means that $B$ is closed under taking joins (when they exist), and (\ref{suffix-closed}) that $B$ is closed under taking suffixes.
\end{defin}

Important examples of finite Garside shadows are the \emph{low elements} $L$ and $m$\emph{-low elements} $L_m$, which we describe in \S\ref{subsection Shi}, and the set $\Gamma$ of \emph{gates} of the \emph{cone type partition}, described in \S\ref{subsection cone types}. Furthermore, note that the intersection of Garside shadows is a Garside shadow, so there exists a smallest Garside shadow, which turns out to be $\Gamma$.

Garside shadows come with a natural projection, introduced in \cite{HNW-Gar}, which can be defined as follows.
\begin{defin}
    Let $B$ be a Garside shadow. The map $\pi_B : W \to B$ given by
    \begin{align*}
        \pi_B(g) = \bigvee\{b \in B : b \preceq g\}
    \end{align*}
    is called the $B$\emph{-projection}.
\end{defin}

Note that this map is well-defined since the set $\{b \in B : b \preceq g\}$ is bounded, so its join always exists. Furthermore, this map satisfies $\pi_B \circ \pi_B = \pi_B$, so it is indeed a projection in this sense. It is also clear from the definition that $\pi_B(g) \preceq g$ for every $g \in W$.

The $B$-projection induces a partition $\mathscr{P}_B$ of $W$ with parts $P_b = \{g \in W : \pi_B(g) = b\}$ for $b \in B$, which is one example of a ``regular partition'' \cite[Def 3.10]{PaY-reg}. Furthermore, by \cite[Lem 4.17]{PaY-reg}, the element $b \in B$ is a \emph{gate} of the part $P_b$, meaning that $b$ is the smallest element of $P_b$ with respect to the right weak order $\preceq$. 

We now prove the following simple lemma which will be useful later on.

\begin{lemma} \label{refinement}
    Let $B$ and $B'$ be Garside shadows such that $B \subseteq B'$, and let $x,y \in W$. If $\pi_{B'}(x) = \pi_{B'}(y)$, then $\pi_B(x) = \pi_B(y)$. 
\end{lemma}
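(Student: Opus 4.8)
The plan is to show that the $B$-projection can be recovered from the $B'$-projection, so that equality of $B'$-projections forces equality of $B$-projections. Concretely, I would prove the key identity
\begin{align*}
\pi_B(x) = \pi_B\bigl(\pi_{B'}(x)\bigr) \quad \text{for every } x \in W.
\end{align*}
Once this is established, the lemma is immediate: if $\pi_{B'}(x) = \pi_{B'}(y)$, then applying $\pi_B$ to both sides and using the identity on each side yields $\pi_B(x) = \pi_B(\pi_{B'}(x)) = \pi_B(\pi_{B'}(y)) = \pi_B(y)$.

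To prove the identity, recall that $\pi_B(x) = \bigvee\{b \in B : b \preceq x\}$ and similarly for $\pi_B(\pi_{B'}(x))$, so it suffices to show that the two sets
\begin{align*}
\{b \in B : b \preceq x\} \quad \text{and} \quad \{b \in B : b \preceq \pi_{B'}(x)\}
\end{align*}
coincide (their joins are then equal). The inclusion $\supseteq$ is easy: since $\pi_{B'}(x) \preceq x$, any $b \preceq \pi_{B'}(x)$ satisfies $b \preceq x$ by transitivity of $\preceq$. For the reverse inclusion $\subseteq$, suppose $b \in B$ with $b \preceq x$. The crucial point is that $B \subseteq B'$, so $b \in B'$; hence $b$ is one of the elements of $B'$ below $x$, and therefore $b \preceq \bigvee\{b' \in B' : b' \preceq x\} = \pi_{B'}(x)$. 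Thus $b \preceq \pi_{B'}(x)$, as required.

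The main obstacle, and the only place real care is needed, is the step where I claim $b \preceq \pi_{B'}(x)$ from $b$ being one of the joined elements. This uses the defining property of a join in the meet-semilattice $(W,\preceq)$: the join of a bounded set is an upper bound for that set, so each member lies below it. I would want to make sure the set $\{b' \in B' : b' \preceq x\}$ is bounded (it is, by $x$) so that the join exists, which is already guaranteed by the well-definedness of $\pi_{B'}$ noted after the definition of the $B$-projection. No properties of Garside shadows beyond $B \subseteq B'$ and the semilattice structure are needed, so the proof is short; the work is entirely in correctly matching the two index sets of the joins.
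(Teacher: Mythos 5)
Your proof is correct, and it takes a genuinely different route from the paper's. The paper argues directly with the two elements $b_1 = \pi_B(x)$ and $b_2 = \pi_B(y)$: since $B \subseteq B'$, both lie in $B'$ and below $x$ (resp.\ $y$), hence both lie below $b' = \pi_{B'}(x) = \pi_{B'}(y)$; it then invokes the join-closure axiom of the Garside shadow $B$ to form $b = b_1 \vee b_2 \in B$, and since $b \preceq b' \preceq x$ and $b \preceq b' \preceq y$, the defining property of $\pi_B(x)$ and $\pi_B(y)$ as joins forces $b_1 = b = b_2$. You instead prove the factorization identity $\pi_B = \pi_B \circ \pi_{B'}$ by showing that the index sets $\{b \in B : b \preceq x\}$ and $\{b \in B : b \preceq \pi_{B'}(x)\}$ coincide, from which the lemma is a one-line corollary. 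The shared kernel of both arguments is the observation that any $b \in B$ with $b \preceq x$ satisfies $b \preceq \pi_{B'}(x)$, because $b \in B'$ and the join is an upper bound of the joined set. What your route buys: it uses no Garside shadow axiom for $B$ at all (neither join-closure nor suffix-closure), only the inclusion $B \subseteq B'$ and the semilattice structure of $(W,\preceq)$, so it is more general; and the identity $\pi_B = \pi_B \circ \pi_{B'}$ is a stronger, reusable statement about how projections to nested shadows compose. The paper's argument is shorter on the page, but your proof shows that its appeal to join-closure of $B$ is not actually needed for this lemma.
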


\begin{proof}
    Let $\pi_{B'}(x) = \pi_{B'}(y) = b' \in B'$, $\pi_B(x) = b_1 \in B$ and $\pi_B(y) = b_2 \in B$. Since $b_1,b_2 \in B'$, we have that $b_1,b_2 \preceq b'$ and thus their join $b = b_1 \vee b_2$ exists and belongs to $B$. Then, $b_1 \preceq b \preceq b' \preceq x$ and $b_2 \preceq b \preceq b' \preceq y$, therefore we must have that $b = b_1 = b_2$.
\end{proof}

\subsection{Shi partitions, elementary walls and low elements} \label{subsection Shi}

Let $\W$ be a wall in $X^1$, $g \in W$ and $m \in \N$. We say that $\W$ is $m$\emph{-close} to $g$ if there are at most $m$ walls separating $g$ from $\W$. The wall $\W$ is called $m$\emph{-elementary} if it is $m$-close to $\id$. We denote by $\Sigma_m$ the set of $m$-elementary walls, and note that these correspond to the $m$\emph{-small roots} in \cite{DHFM24} (also called $m$\emph{-elementary roots} in \cite{BH-par, Fu-E_m}). For $m=0$, we simply call such walls \emph{elementary} and denote by $\Sigma$ the set $\Sigma_0$. The Parallel Wall Theorem of Brink and Howlett, which was crucial to the proof that every Coxeter group is automatic, states that the set $\Sigma$ is finite \cite{BH-par}. More generally, it is also the case that $\Sigma_m$ is finite for every $m \in \N$, as proven by Fu in \cite{Fu-E_m}. 

The connected components $Y$ of $X^1 \setminus \Sigma_m$ are called $m$\emph{-Shi components} and the $m$\emph{-Shi part} associated to $Y$ is $Y \cap X^0$. The $m$\emph{-Shi partition} $\mathscr{S}_m$ is the partition of $W$ which has as parts the $m$-Shi parts. For $m=0$, we simply call the parts $Y \cap X^0$ \emph{Shi parts} and denote by $\mathscr{S}$ the partition $\mathscr{S}_0$, called the \emph{Shi partition}. The partitions $\mathscr{S}$ and $\mathscr{S}_m$ are examples of regular partitions \cite[Thm 3.11]{PaY-reg}. 

It is proven in \cite{DHFM24} in the general case, and independently in \cite{PrY-pair} for the case $m=0$, that each $m$-Shi part has a smallest element with respect to the order $\preceq$, called a \emph{gate} in \cite{PaY-reg}. Denote by $L_m$ the set of gates of the $m$-Shi partition, that is the set of smallest elements for each $m$-Shi part. Similarly, denote by $L$ the set $L_0$. Note that by \cite[Thm 1.1]{DHFM24}, $L_m$ is equal to the set of so called $m$\emph{-low elements}, and thus forms a finite Garside shadow for each $m \in \N$. This fact was proven in the case $m=0$ in \cite{DH-Garside} and independently in \cite{PrY-pair}, and the general case was completed in \cite{D-mlow}. This leads to the following result.

\begin{lemma} \label{lem-mShi}
    If $g \in W$ and $m \in \N$ then $\pi_{L_m}(g)$ is the gate of the $m$-Shi part containing $g$. Therefore, the partition $\mathscr{P}_{L_m}$ is equal to $\mathscr{S}_m$ and the partition $\mathscr{P}_L$ is equal to $\mathscr{S}$.
\end{lemma}

\begin{proof}
    Let $\pi_{\mathscr{S}_m} : W \to L_m$ be the map assigning $g \in W$ to the gate of the $m$-Shi part containing $g$. Suppose for a contradiction that $\pi_{\mathscr{S}_m}(g) \neq \pi_{L_m}(g)$, meaning that there is an $m$-elementary wall $\W$ separating $\pi_{\mathscr{S}_m}(g)$ from $\pi_{L_m}(g)$. Then, by definition of $\pi_{L_m}$, we have that $\pi_{\mathscr{S}_m}(g) \preceq \pi_{L_m}(g) \preceq g$. This implies that $\W$ also separates $\pi_{\mathscr{S}_m}(g)$ from $g$, contradicting the fact that they belong to the same $m$-Shi part. Hence, $\pi_{\mathscr{S}_m} = \pi_{L_m}$, and since two elements $x,y \in W$ belong to the same $m$-Shi part if and only if $\pi_{\mathscr{S}_m}(x) = \pi_{\mathscr{S}_m}(y)$, it follows that $\mathscr{S}_m = \mathscr{P}_{L_m}$ and $\mathscr{S} = \mathscr{P}_L$.
\end{proof}

\subsection{Cone types} \label{subsection cone types}

The \emph{cone types} in the group $W$ give rise to a second important example of a finite Garside shadow in $(W,S)$, which turns out to be the smallest one. 

\begin{defin}
    Let $g, h \in W$. The \emph{cone type of} $g$ \emph{based at} $h$ is the set
    \begin{align*}
        T_{g,h} = \{f \in W : \ell(g^{-1}h) + \ell(h^{-1}f) = \ell(g^{-1}f)\}.
    \end{align*}
    Equivalently, $T_{g,h}$ is the set of elements $f \in W$ for which there exists a geodesic in $X^1$ from $g$ to $f$ passing through $h$. The set $T_{g^{-1}, \id}$ is called the \emph{cone type of} $g$, and corresponds to the set $T(g)$ in the notation of \cite{PaY-reg}. The \emph{cone type part of} $g$ is the set of all elements $h \in W$ such that $T_{h, \id} = T_{g, \id}$. 
\end{defin}

It turns out that the set of all cone types in $W$ is finite, which is a consequence of the fact that $\Sigma$ is finite \cite[Cor 2.10]{PaY-reg}. Let $\mathscr{T}$ be the \emph{cone type partition}, that is the partition of $W$ which has as parts the cone type parts. Parkinson and Yau show in \cite{PaY-reg} that $\mathscr{T}$ is another example of a regular partition, and in particular that $\mathscr{S}$ and $\mathscr{S}_m$ are refinements of $\mathscr{T}$ (in the sense that every part of $\mathscr{S}$ or $\mathscr{S}_m$ is contained in some part of $\mathscr{T}$). The partition $\mathscr{T}$ also has the property that it is \emph{convex}, meaning that for all pairs of elements $x,y$ in a cone type part $Q$ any geodesic from $x$ to $y$ has all vertices in $Q$. This was proven in \cite[Prop 4.3]{PaY-reg} and alternatively in \cite[Rem 3.1]{PrY-pair}, and we state a special case of this as a Lemma for later use.

\begin{lemma} \label{lem-convex}
    Let $f,g,h \in W$ be such that $f \preceq g \preceq h$. If $f$ and $h$ belong to the same cone type part then $T_{f, \id} = T_{g, \id} = T_{h, \id}$.
\end{lemma}

It is also the case that every cone type part has a gate. This was proven in \cite[Thm 1]{PaY-reg} and alternatively in \cite{PrY-pair}. Let $\Gamma$ be the set of gates of $\mathscr{T}$, which is finite. The set $\Gamma$ was used in \cite{PaY-reg} to construct the minimal automaton for the language of reduced words of $(W,S)$. Furthermore, Parkinson and Yau showed in that paper that $\Gamma$ is closed under taking suffixes, and conjectured that $\Gamma$ is the smallest Garside shadow in $(W,S)$. Przytycki and Yau then verified in \cite{PrY-pair} that $\Gamma$ is also closed under taking joins, thus settling this conjecture and a conjecture of Hohlweg, Nadeau and Williams in \cite{HNW-Gar} that the automaton constructed from the smallest Garside shadow is the minimal automaton for the language of reduced words of $(W,S)$. This leads to the following result, similar in nature to Lemma \ref{lem-mShi}.

\begin{lemma} \label{lem-cone}
    If $g \in W$ then $\pi_{\Gamma}(g)$ is the gate of the cone type part containing $g$. Therefore, the partition $\mathscr{P}_{\Gamma}$ is equal to $\mathscr{T}$.
\end{lemma}

\begin{proof}
    Let $\pi_{\mathscr{T}} : W \to \Gamma$ be the map assigning $g \in W$ to the gate of the cone type part containing $g$. Suppose for a contradiction that $\pi_{\mathscr{T}}(g) \neq \pi_{\Gamma}(g)$, meaning that $T_{\pi_{\mathscr{T}}(g), \id} \neq T_{\pi_{\Gamma}(g), \id}$. Then, by definition of $\pi_{\Gamma}$, we have that $\pi_{\mathscr{T}}(g) \preceq \pi_{\Gamma}(g) \preceq g$. But this implies by Lemma \ref{lem-convex} that $T_{\pi_{\mathscr{T}}(g), \id} = T_{\pi_{\Gamma}(g), \id} = T_{g,\id}$, which is a contradiction. Hence, $\pi_{\mathscr{T}} = \pi_{\Gamma}$, and since two elements $x,y \in W$ belong to the same cone type part if and only if $\pi_{\mathscr{T}}(x) = \pi_{\mathscr{T}}(y)$, it follows that $\mathscr{T} = \mathscr{P}_{\Gamma}$. 
\end{proof}

Another important result about the Garside shadow $\Gamma$ which we will use later on is the following Proposition, verified in \cite[Prop 4.29]{PaY-reg}.

\begin{prop} \label{cone type garside}
    If $B$ is a Garside shadow then $\Gamma \subseteq B$. 
\end{prop}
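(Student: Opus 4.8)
The plan is to reduce everything to one refinement property and then finish with a short computation about gates. Write $\mathscr{T}$ for the cone type partition, so that $\Gamma$ is precisely the set of $\preceq$-minimal elements (the gates) of its parts. The crux is the claim that $\mathscr{P}_B$ refines $\mathscr{T}$; that is, if $\pi_B(x)=\pi_B(y)$ then $x$ and $y$ have the same cone type. Granting this, I would argue as follows. Let $\gamma\in\Gamma$ be the gate of a cone type part $P$, and put $b=\pi_B(\gamma)\in B$. Then $b\preceq\gamma$, and since $b\in B$ we have $\pi_B(b)=b$, so $\gamma$ and $b$ lie in the same part of $\mathscr{P}_B$. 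Because $\mathscr{P}_B$ refines $\mathscr{T}$, that part is contained in a single cone type part, which must be $P$; hence $b\in P$, i.e.\ $b$ has the same cone type as $\gamma$. As $\gamma$ is the $\preceq$-minimal element of $P$ and $b\preceq\gamma$, we get $b=\gamma$, so $\gamma=\pi_B(\gamma)\in B$. Since $\gamma$ was arbitrary, $\Gamma\subseteq B$.

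It is worth isolating the case $B=L$, which already follows from the material above. Indeed, $\mathscr{S}=\mathscr{P}_L$ refines $\mathscr{T}$, so the argument of the previous paragraph applies verbatim with $B=L$ and yields $\Gamma\subseteq L$: the gate $\gamma$ of a cone type part $P$ lies in some Shi part $Y\subseteq P$, whose own gate $\pi_L(\gamma)\in L$ satisfies $\pi_L(\gamma)\preceq\gamma$ and lies in $P$, forcing $\gamma=\pi_L(\gamma)\in L$. Thus the entire difficulty is concentrated in upgrading the refinement property from the specific Garside shadow $L$ to an \emph{arbitrary} Garside shadow $B$, for which I cannot simply invoke Lemma \ref{refinement}: there is no inclusion between $L$ and a general $B$, and enlarging $B$ makes $\mathscr{P}_B$ finer rather than coarser, which is the wrong direction.

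To prove the refinement for a general $B$, I would show directly that $T(\pi_B(g))=T(g)$ for every $g\in W$. Writing $b=\pi_B(g)\preceq g$, the walls separating $\id$ from $b$ are a subset of those separating $\id$ from $g$, the remaining ones being exactly those separating $b$ from $g$. The cone type of $g$ is governed only by the \emph{elementary} walls among its inversions (the Brink--Howlett small-root description of cone types underlying \cite{PaY-reg}), so it suffices to check that every elementary inversion of $g$ is already an inversion of $b$; equivalently, that no elementary wall lies strictly between $b$ and $g$. Here the hypotheses on $B$ must be used: since $S\subseteq B$ and $B$ is closed under joins and suffixes, the shallow combinatorial datum of $g$ that an elementary wall records ought to be realized by an element of $B$ below $g$, hence below $b=\bigvee\{c\in B:c\preceq g\}$.

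The step I expect to be the main obstacle is precisely this last point: making rigorous that the elementary-wall (cone type) data of $g$ is captured by $\pi_B(g)$ for an arbitrary Garside shadow. The delicate issue is that the short element witnessing a given elementary wall is naturally an element of $L$, and need not a priori belong to $B$; transferring such a witness into $B$ is where the join- and suffix-closure of $B$ must be exploited in an essential way, and is exactly the geometric input established in \cite[Prop 4.29]{PaY-reg}. In fact the refinement property and the proposition are essentially equivalent (granting the expected identity $\mathscr{P}_\Gamma=\mathscr{T}$, Lemma \ref{refinement} recovers the refinement from $\Gamma\subseteq B$), so no purely formal manipulation of the earlier results can bypass this wall combinatorics.
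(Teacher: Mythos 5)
First, a fact that matters for this comparison: the paper contains no proof of this proposition at all. It is imported as an external result (``verified in \cite[Prop 4.29]{PaY-reg}''), with the join-closure of $\Gamma$ credited to \cite{PrY-pair}; so there is no internal argument to measure your sketch against, and any genuine blind proof would necessarily be ``different'' from the paper's. Within your sketch, two pieces are correct: the formal reduction (if every part of $\mathscr{P}_B$ has constant cone type, then for the gate $\gamma$ of a cone type part the element $b=\pi_B(\gamma)\preceq\gamma$ lies in the same cone type part, and gate-minimality forces $b=\gamma\in B$), and the special case $B=L$, which really does follow from facts the paper quotes ($\mathscr{P}_L=\mathscr{S}$, $\mathscr{S}$ refines $\mathscr{T}$, cone type parts have gates). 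But the crux --- that $\pi_B(x)=\pi_B(y)$ implies equal cone types for an \emph{arbitrary} Garside shadow $B$ --- is left unproved, and you yourself note that it is essentially equivalent to the proposition and that its content is exactly \cite[Prop 4.29]{PaY-reg}. A proof whose key step is the statement being proved is a reformulation, not a proof; that is the gap.

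Moreover, the concrete strategy you sketch for filling that gap would fail, not merely resist formalization. You propose to show that no elementary wall separates $b=\pi_B(g)$ from $g$, i.e.\ that $b$ and $g$ have the same elementary inversions. This is indeed sufficient (it puts $b$ and $g$ in the same Shi part, and $\mathscr{S}$ refines $\mathscr{T}$), but it asserts that $\mathscr{P}_B$ refines $\mathscr{S}$ for \emph{every} Garside shadow $B$, which is false. Apply it to $B=\Gamma$, a Garside shadow by \cite{PrY-pair}: for $l\in L$ the gate of a Shi part $Y$, the element $\gamma=\pi_\Gamma(l)\preceq l$ would then lie in $Y$, and minimality of $l$ in $Y$ gives $l\preceq\gamma$, hence $l=\gamma\in\Gamma$. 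This yields $L\subseteq\Gamma$, i.e.\ $\Gamma=L$ for every Coxeter system --- but $\Gamma\subsetneq L$ in general, which is precisely why the minimal automaton of \cite{PaY-reg} differs from the Brink--Howlett automaton built from $\mathscr{S}$. So for a general Garside shadow, $\pi_B(g)$ can change the Shi part (and the set of elementary inversions) of $g$ while preserving its cone type; the elementary-wall route is structurally too strong, and a correct argument must engage with cone types directly, as the cited proof of Parkinson and Yau does.
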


\section{Biautomatic structures} \label{section Biau}

\subsection{Biautomaticity}

Biautomatic structures were introduced in the book \cite{wordprocessing}. We provide the following definition, which is equivalent to the characterisation of biautomaticity in \cite[Lem 2.5.5]{wordprocessing}, and refer the reader to \S\ref{Coxeter groups and words} for a reminder on the notations used below. 

\begin{defin}
    Let $G$ be a group. A \emph{biautomatic structure for} $G$ is a pair $(S,\LL)$ where $S$ is a finite generating set for $G$ closed under inversion and $\LL$ is a regular language (see Section \ref{section regularity} for the definition of a regular language) for which there exists constants $C$ and $C'$, satisfying the following conditions:
    \begin{enumerate}
        \item \label{surj L} for every $g \in G$, there is a word in $\LL$ representing $g$;
        \item \label{defFTP1} for every $v,v' \in \LL$ representing elements $g, g' \in G$ with $g'=gs$ for some $s \in S$, we have $d(\overline{v(i)},\overline{v'(i)}) \leq C$ for all $i\geq 1$;
        \item \label{defFTP2} for every $v,v' \in \LL$ representing elements $g, g' \in G$ with $g'=sg$ for some $s \in S$, we have $d(s\overline{v(i)},\overline{v'(i)}) \leq C'$ for all $i\geq 1$.
    \end{enumerate}
    Conditions (\ref{defFTP1}) and (\ref{defFTP2}) are called the \emph{fellow traveller property}. In particular, we will refer to condition (\ref{defFTP1}) as the first fellow traveller property, and condition (\ref{defFTP2}) as the second fellow traveller property. If $\LL$ satisfies conditions (\ref{surj L}) and (\ref{defFTP1}) but not necessarily (\ref{defFTP2}), then $(S,\LL)$ is called an \emph{automatic structure}. $G$ is said to be \emph{automatic} if it admits an automatic structure, and \emph{biautomatic} if it admits a biautomatic structure.
\end{defin}

We note that this definition is equivalent to the original definition of biautomaticity if in condition (\ref{surj L}) the set of words in $\LL$ representing each element of $G$ is finite \cite[Thm 6]{biautomaticdef}.

\subsection{The voracious projection and voracious language}

Crucial to the proof of the biautomaticity of Coxeter groups by Osajda and Przytycki \cite{OP-bi} is the notion of the \emph{voracious projection} of an element $g \in W$, which is introduced in that same paper. In order to define it, they consider the set $\W(g)$ of walls in $X^1$ that separate $g$ from $\id$ and which are closest to $g$ (that is $0$-close to $g$). Then, they define the set $P(g)$ of elements $p \in W$ such that $p \preceq g$ and $p$ is not separated from $\id$ by any wall in $\W(g)$. Finally, they show that $P(g)$ has a largest element $p(g)$ with respect to $\preceq$ and set the voracious projection of $g$ to be that element.

Hohlweg and Parkinson proposed the following generalization (as we will see in Proposition \ref{vorOG}) of the notion of voracious projection, which we study here.

\begin{defin} \label{vorproj-def}
    Let $B$ be a Garside shadow. The \emph{voracious projection with respect to} $B$ is the map $\vr_B : W \to W$ given by $\nu_B(g) = g \pi_B(g^{-1})$.
\end{defin}

Let $g \in W$ and denote by $p(g)$ the voracious projection defined as above. We verify in the following proposition that the voracious projection $\vr_L(g)$ of $g$ with respect to the set $L$ of low elements is exactly $p(g)$. 

\begin{prop} \label{vorOG}
    $p(g) = \vr_L(g)$ for every $g \in W$.
\end{prop}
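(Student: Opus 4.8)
The plan is to unwind both definitions and show they produce the same element. Starting from the right, recall $\nu_L(g) = g\,\pi_L(g^{-1})$, so the claim $p(g) = \nu_L(g)$ is equivalent to $g^{-1}p(g) = \pi_L(g^{-1})$. Setting $h = g^{-1}$, it suffices to show that $\pi_L(h) = g^{-1}p(g)$, i.e.\ that the $L$-projection of $h = g^{-1}$ is the $g^{-1}$-translate of the element $p(g)$ that Osajda and Przytycki single out. So the first step is to translate everything by $g^{-1}$ and work with $h = g^{-1}$ and the set $g^{-1}P(g) = \{g^{-1}p : p \in P(g)\}$.

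Next I would compare the two descriptions explicitly. On one side, $\pi_L(h) = \bigvee\{b \in L : b \preceq h\}$ is a join of low elements below $h$. On the other side, $P(g)$ consists of those $p \preceq g$ that are not separated from $\id$ by any wall in $\mathcal{W}(g)$, the set of walls $0$-close to $g$ separating $g$ from $\id$. The key translation step is to understand what the condition defining $P(g)$ becomes after applying $g^{-1}$: a wall $\mathcal{W}$ separating $g$ from $\id$ corresponds, under $g^{-1}$, to a wall $g^{-1}\mathcal{W}$ separating $\id$ from $h = g^{-1}$, and the $0$-closeness condition to $g$ becomes $0$-closeness to $\id$, i.e.\ membership in the set $\Sigma = \Sigma_0$ of elementary walls. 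Thus the walls $g^{-1}\mathcal{W}(g)$ are precisely the elementary walls separating $\id$ from $h$. This reinterprets $g^{-1}P(g)$ as the set of $q \preceq h$ not separated from $\id$ by any elementary wall that separates $\id$ from $h$.

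The heart of the proof is then to identify this translated set and its maximum with the Shi-theoretic/low-element picture. The elementary walls separating $\id$ from $h$ that are $0$-close to $h$ cut out exactly the boundary of the Shi part containing $h$, so the condition ``$q$ is not separated from $\id$ by any such wall'' should characterise membership in (the down-set below $h$ inside) that same Shi region, and the maximal such $q$ should be the join of the low elements below $h$. Concretely, I would invoke that $\mathscr{P}_L = \mathscr{S}$ (the $L$-projection partition coincides with the Shi partition) and that $L$ is the set of gates of $\mathscr{S}$, established in \S\ref{subsection Shi}. Using that $\pi_L(h)$ is the gate (smallest element) of the Shi part of $h$ and that the gate is characterised as the join of low elements below $h$, I would match the largest element $q$ of the translated set with $\pi_L(h)$: both are determined by exactly the elementary walls that are $0$-close to $h$ and separate $h$ from $\id$.

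The main obstacle I anticipate is the careful bookkeeping of walls and half-spaces under the left-translation by $g^{-1}$, and in particular verifying that ``$0$-close to $g$'' translates exactly to ``$0$-close to $\id$'' (hence to elementary walls) rather than to some shifted closeness condition; this uses that left multiplication by $g^{-1}$ is an isometry of $X^1$ sending $g \mapsto \id$ and permuting walls while preserving the separation and $m$-closeness relations. Once that isometry argument is pinned down, matching the maximum of $P(g)$ with the join defining $\pi_L$ becomes a comparison of two descriptions of the gate of a single Shi part, which should follow formally from the identifications recalled in \S\ref{subsection Shi}. I would therefore structure the write-up as: (1) reduce to $\pi_L(g^{-1}) = g^{-1}p(g)$; (2) translate by $g^{-1}$ and show $\mathcal{W}(g)$ corresponds to the elementary walls $0$-close to $h=g^{-1}$; (3) conclude that $g^{-1}P(g)$ and $\{b \in L : b \preceq h\}$ have the same maximal element, namely the gate of the Shi part of $h$.
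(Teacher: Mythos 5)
Your overall strategy---reduce to $g^{-1}p(g) = \pi_L(g^{-1})$, translate by $g^{-1}$, and identify $g^{-1}\W(g)$ with the elementary walls separating $\id$ from $h = g^{-1}$---is sound, and that identification of walls is correct. However, there is a genuine error at the key step: the separation condition defining $P(g)$ is translated backwards. Left multiplication by $g^{-1}$ sends $g \mapsto \id$ but also $\id \mapsto h$, so ``$p$ is not separated from $\id$ by any wall of $\W(g)$'' becomes ``$q = g^{-1}p$ is not separated from \emph{$h$} by any elementary wall separating $\id$ from $h$,'' i.e.\ $q$ lies on the $h$-side of each such wall --- the opposite of your condition that $q$ is not separated from $\id$. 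Since each wall in question separates $\id$ from $h$ and $q \preceq h$, the two conditions are mutually exclusive wall by wall. The set you describe is therefore not $g^{-1}P(g)$, and its extremal element is not $\pi_L(h)$. Concretely, in the infinite dihedral group with $h = stst$ (so $g = tsts$), the only elementary wall separating $\id$ from $h$ is $\W_s$; your set is $\{\id\}$, whereas $g^{-1}P(g) = \{s, st, sts, stst\}$ and $\pi_L(h) = s$. In fact $\pi_L(h)$ \emph{fails} your condition whenever some elementary wall separates $\id$ from $h$, precisely because $\pi_L(h)$ lies in the same Shi part as $h$; so the claim in your third paragraph that your condition characterises membership in the Shi region of $h$ is also reversed.

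A second, related slip: left translation by $g^{-1}$ \emph{reverses} $\preceq$ on the interval below $g$ (if $x \preceq y \preceq g$ then $g^{-1}y \preceq g^{-1}x \preceq g^{-1}$), so the largest element $p(g)$ of $P(g)$ corresponds to the \emph{smallest} element of $g^{-1}P(g)$, not to its maximum as asserted in your step (3). With both points corrected, your route does close: the correctly translated set is $\{q \preceq h : \text{no elementary wall separates } q \text{ from } h\}$, which is the intersection of the Shi part of $h$ with the elements below $h$, and its minimum is the gate $\pi_L(h)$ since $\mathscr{P}_L = \mathscr{S}$; translating back gives $p(g) = g\,\pi_L(h) = \vr_L(g)$. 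The paper's proof follows the same idea but sidesteps this bookkeeping by arguing in the untranslated picture: it shows directly that $\vr_L(g) \in P(g)$ (any wall of $\W(g)$ separating $\vr_L(g)$ from $\id$ would pull back under $g^{-1}$ to an elementary wall separating $\pi_L(g^{-1})$ from $g^{-1}$), and that any $h' \in P(g)$ with $\vr_L(g) \preceq h'$ equals $\vr_L(g)$, translating individual walls by $g$ and $g^{-1}$ as needed.
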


\begin{proof}
    We need to show that $\vr_L(g)$ belongs to $P(g)$ and that it is the largest element in that set. Since $\pi_L(g^{-1}) \preceq g^{-1}$, we have that $\vr_L(g) \preceq g$. Furthermore, because $g^{-1}\W(g) \subseteq \Sigma$, we cannot have that $\vr_L(g)$ is separated from $\id$ by any wall in $\W(g)$, otherwise $\pi_L(g^{-1})$ and $g^{-1}$ would be separated by a wall in $\Sigma$, contradicting the fact that they belong to the same Shi part by Lemma \ref{lem-mShi}. Hence, $\vr_L(g) \in P(g)$. Now, suppose there exists an element $h \in P(g)$ such that $\vr_L(g) \preceq h$. Then, $g^{-1}h \preceq \pi_L(g^{-1}) \preceq g^{-1}$. We claim that $g^{-1}h$ is not separated from $g^{-1}$ by any wall in $\Sigma$, so that $g^{-1}h = \pi_L(g^{-1})$ by the definition of $\pi_L$, and thus $h = \vr_L(g)$. Indeed, suppose there is a wall $\W \in \Sigma$ separating $g^{-1}h$ and $g^{-1}$. Then, $g\W$ separates $h$ from $\id$. But note that $g\W \in \W(g)$ since for any wall $\W'$ separating $g\W$ from $g$ we have that $g^{-1}\W'$ separates $\id$ from $\W$, contradicting the fact that $\W$ is elementary. Hence, $h$ is separated from $\id$ by a wall in $\W(g)$ which contradicts the fact that $h \in P(g)$. Therefore, no such wall $\W$ can exist, so $h = \vr_L(g)$.
\end{proof}

Definition \ref{vorproj-def} is thus exactly the same as the original definition of voracious projection, but instead of using the Shi partition $\mathscr{S}$ and its gates $L$, we use the partition $\mathscr{P}_B$ associated to the Garside shadow $B$ and its gates. Furthermore, the map $\vr_B$ also enjoys some of the desirable properties of the original voracious projection. The following lemma is a generalization of \cite[Lem 5.1]{OP-bi}.

\begin{lemma} \label{v-ineq-lemma}
    If $g,g' \in W$ are such that $\vr_B(g) \preceq g' \preceq g$, then $\vr_B(g') \preceq \vr_B(g)$.
\end{lemma}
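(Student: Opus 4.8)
The plan is to reduce the statement to a single inequality about $\pi_B(g'^{-1})$ and then close the argument using the join-definition of the $B$-projection together with the fact that $B$ is closed under suffixes. Write $b=\pi_B(g^{-1})$ and $u=g^{-1}g'$, so that $\nu_B(g)=gb$ and $u^{-1}=g'^{-1}g$. Since $g'\preceq g$ we have $\ell(u)=\ell(g)-\ell(g')$, and since $\pi_B(g^{-1})\preceq g^{-1}$ a one-line length count gives $\nu_B(g)\preceq g$ and $\ell(\nu_B(g))=\ell(g)-\ell(b)$. Note that both $\nu_B(g)$ and $\nu_B(g')$ lie in the interval $\{x:x\preceq g'\}$: the former by hypothesis, the latter because $\nu_B(g')\preceq g'$ holds unconditionally.

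First I would record that left multiplication by $g'^{-1}$ restricts to an order-reversing bijection of $\{x:x\preceq g'\}$ onto $\{x:x\preceq g'^{-1}\}$. This is immediate from the description of $\preceq$ by separating walls noted in Section~\ref{The right weak order}: translating by $g'^{-1}$ exchanges the walls separating $\id$ from $x$ with those separating $x$ from $g'$, so it reverses inclusion of the corresponding wall-sets. Applying this bijection turns the desired inequality $\nu_B(g')\preceq\nu_B(g)$ into the equivalent inequality $g'^{-1}\nu_B(g)\preceq g'^{-1}\nu_B(g')$. The right-hand side simplifies to $g'^{-1}\nu_B(g')=\pi_B(g'^{-1})$, while the left-hand side is $c:=g'^{-1}gb=u^{-1}b$. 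Thus it suffices to prove $u^{-1}b\preceq\pi_B(g'^{-1})$.

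To finish, I would show that $c=u^{-1}b$ is an element of $B$ lying below $g'^{-1}$. That $c\preceq g'^{-1}$ is exactly the image of the hypothesis $\nu_B(g)\preceq g'$ under the order-reversing map above. For membership in $B$, observe that $uc=b$, and that a short length computation using $\nu_B(g)\preceq g'$ yields $\ell(c)=\ell(g')-\ell(\nu_B(g))=\ell(b)-\ell(u)$; hence $b=u\,c$ with $\ell(b)=\ell(u)+\ell(c)$, so $c$ is a suffix of $b$. Since $b\in B$ and $B$ is closed under suffixes, we get $c\in B$. Now $c\in B$ and $c\preceq g'^{-1}$ together with the definition $\pi_B(g'^{-1})=\bigvee\{x\in B:x\preceq g'^{-1}\}$ give $c\preceq\pi_B(g'^{-1})$, as required.

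The wall translation and the length bookkeeping are routine; the single place where the hypothesis $\nu_B(g)\preceq g'$ is genuinely used is in forcing $\ell(c)=\ell(b)-\ell(u)$, i.e.\ in guaranteeing that $u^{-1}b$ is a suffix of $b$ and therefore lies in $B$. I expect this to be the main point to get right: without the lower bound $\nu_B(g)\preceq g'$, the element $u^{-1}b$ need not be a suffix of $b$, and the reduction to the definition of $\pi_B(g'^{-1})$ would fail.
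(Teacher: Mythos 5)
Your proof is correct and is essentially the paper's own argument: your element $c = u^{-1}b = g'^{-1}\nu_B(g)$ is exactly the paper's auxiliary element $b'$, and both proofs hinge on showing that this element is a suffix of $b$ (hence lies in $B$ by suffix-closure) and satisfies $c \preceq g'^{-1}$, then concluding via the join definition of $\pi_B$ together with order-reversal under left translation by $g'$. The only difference is presentational: the paper reads off the suffix relation and the inequalities from labels of translated geodesics, while you phrase the same manipulations through length bookkeeping and an explicitly stated order-reversing bijection.
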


\begin{proof}
    Let $b \in B$ with $\vr_B(g) = gb$ and let $b' = g'^{-1}\vr_B(g)$. Since $\vr_B(g) \preceq g' \preceq g$, there exists a geodesic $\gamma$ from $\id$ to $g$ in $X^1$ with a subpath from $\vr_B(g)$ to $g'$ labeled by $b'^{-1}$. See Figure \ref{f:ineqfig} for an illustration. Then, translating $\gamma$ by $g^{-1}$ we find that $g^{-1}g' \preceq b$ and that the subpath from $g^{-1}g'$ to $b$ in $g^{-1}\gamma$ is labeled by $b'$. Hence, $b'$ is a suffix of $b$ and thus $b'\in B$. Now, since $g'b' = \vr_B(g)$, translating $\gamma$ by $g'^{-1}$ gives $b' \preceq g'^{-1}$. Thus, since $b' \preceq \pi_B(g'^{-1}) \preceq g'^{-1}$, we obtain $\vr_B(g') = g'\pi_B(g'^{-1}) \preceq g'b' = \vr_B(g)$ as required.
\end{proof}

\begin{figure}[h!]
\begin{center}
\includegraphics[scale=1.1]{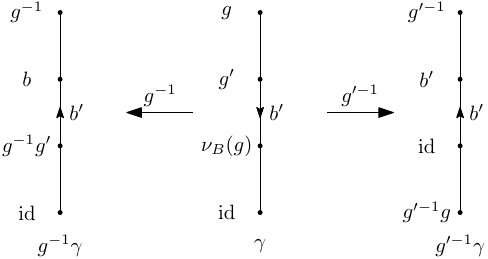}
\end{center}
\caption{Proof of Lemma \ref{v-ineq-lemma}.}
\label{f:ineqfig}
\end{figure}

If the Garside shadow $B$ is finite, we can also guarantee that the voracious projection $\vr_B(g)$ of $g$ does not ``jump too far'', and that the gates $B$ of $\mathscr{P}_B$ are also gates of some $m$-Shi partition.


\begin{lemma} \label{constM}
Let $B$ be a finite Garside shadow. Then there exists a constant $M = M(B)$ such that:
\begin{enumerate}
    \item \label{contM-1} $B \subseteq L_M$;
    \item $d(\vr_B(g), g) \leq M$ for every $g \in W$.
\end{enumerate}
\end{lemma}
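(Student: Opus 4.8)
The plan is to prove the two assertions separately and then take $M$ to be the larger of the two resulting constants, using that the sets $L_m$ are nested.

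Assertion (2) is the easy half, and I would dispatch it first. Since $\vr_B(g) = g\pi_B(g^{-1})$, we have $g^{-1}\vr_B(g) = \pi_B(g^{-1})$, so that $d(\vr_B(g),g) = \ell(g^{-1}\vr_B(g)) = \ell(\pi_B(g^{-1}))$. As $\pi_B$ takes values in $B$ and $B$ is finite, this is bounded above by $M_2 := \max_{b \in B}\ell(b)$, a bound independent of $g$.

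For assertion (1), the idea is to exhibit, for each fixed $b \in B$, an integer making $b$ a gate of the corresponding Shi partition, and then take a maximum over the finite set $B$. Two facts are needed. First, the sequence $(L_m)_{m \ge 0}$ is increasing: since $\Sigma_m \subseteq \Sigma_{m'}$ for $m \le m'$, the partition $\mathscr{S}_{m'}$ refines $\mathscr{S}_m$, and if $w$ is the $\preceq$-minimal element (gate) of its $m$-Shi part $P$, then the $m'$-Shi part $P'$ containing $w$ satisfies $P' \subseteq P$, so $w$ remains the $\preceq$-minimum of $P'$ and hence lies in $L_{m'}$. Second, $\bigcup_{m} L_m = W$: for any $g \in W$, each of the finitely many walls crossing an edge of $X^1$ incident to $g$ is separated from $\id$ by only finitely many walls, so for $m$ large enough all of them lie in $\Sigma_m$; this disconnects $g$ from all its neighbours in $X^1 \setminus \Sigma_m$, making $\{g\}$ its own $m$-Shi part and $g$ its trivial gate, so $g \in L_m$. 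Combining the two facts, for each $b \in B$ I choose $m_b$ with $b \in L_{m_b}$ and set $M_1 := \max_{b \in B} m_b$; monotonicity then yields $B \subseteq L_{M_1}$. Finally take $M := \max(M_1, M_2)$, which satisfies both conclusions since $L_{M_1} \subseteq L_M$.

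The computation behind (2) is routine, so I expect the main work to lie in (1), and more precisely in the monotonicity of $(L_m)_m$: one must check that passing to a finer $m'$-Shi part does not destroy the gate property, i.e.\ that the minimum of the coarser part $P$ is still the minimum of the sub-part $P'$ containing it. The isolation argument for $\bigcup_m L_m = W$ is geometrically clear but should be stated carefully, using that removing a wall from $X^1$ severs exactly the edges it crosses and that each such wall is $m$-elementary for $m$ sufficiently large.
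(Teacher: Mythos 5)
Your proof is correct, but for part (\ref{contM-1}) it takes a genuinely different route from the paper. The paper sets $M = \max\{\ell(b) : b \in B\}$ once and argues directly that each $b \in B$ is the gate of its $M$-Shi part: every wall $\W$ separating $b$ from $\id$ is $M$-elementary (any wall separating $\id$ from $\W$ also separates $\id$ from $b$, and there are only $\ell(b) \leq M$ of those), so no wall of $\Sigma_M$'s complement issue arises --- concretely, a wall separating $b$ from both $\id$ and an element of its $M$-Shi part would lie in $\Sigma_M$ and separate two points of that part, a contradiction; hence $b \in L_M$, and the same constant serves for part (2). You instead establish two general structural facts about the filtration: that $(L_m)_{m \geq 0}$ is nested increasing (via refinement of the Shi partitions, noting the gate of a part remains the gate of any sub-part containing it) and that $\bigcup_m L_m = W$ (via the isolation argument cutting all edges at $g$), and then invoke finiteness of $B$ to extract a uniform bound. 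Both of your auxiliary steps are sound: each edge of $X^1$ is crossed by exactly one wall, and any wall separating $\id$ from the wall crossing $\{g,gs\}$ must separate $\id$ from $g$, so indeed only finitely many do. What each approach buys: the paper's is a three-line computation with an explicit constant usable downstream (in Propositions \ref{PropFTPa} and \ref{propFTPb} only existence of $M$ matters, so this is cosmetic); yours isolates two reusable facts about the family $(L_m)$ that the paper never states, at the cost of two extra lemmas and a less explicit constant --- though tracking your isolation argument quantitatively shows $b \in L_{\ell(b)}$, which recovers exactly the paper's $M = \max_{b \in B}\ell(b)$. Your part (2) coincides with the paper's. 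One further observation both proofs share implicitly: neither uses the Garside shadow axioms for part (\ref{contM-1}); it holds for an arbitrary finite subset $B \subseteq W$.
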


\begin{proof}
Let $M = \max\{\ell(b) : b \in B\}$. Then, given $b \in B$, any wall separating $b$ from $\id$ is $M$-elementary, so it follows that $b$ is the smallest element in its $M$-Shi part. Hence, $b \in L_M$. For the second part, note that for any $g\in W$ we have that $\vr_B(g) = gb$ for some $b \in B$ and thus $d(\vr_B(g), g) = d(b,\id) = \ell(b) \leq M$.
\end{proof}

We can now also generalize the voracious language of Osajda and Przytycki \cite{OP-bi} by defining it in a similar way for any Garside shadow.

\begin{defin}
    Let $B$ be a Garside shadow. Given $g \in W$, define inductively on $\ell(g)$ a language $\mathcal{L}_g$ of words over $S$ representing $g$ by setting $\LL_{\id} = \{\varepsilon\}$ and
    \begin{align*}
        \LL_g = \{uv : u \in \LL_{\vr_B(g)}, v \in S^* \text{ of minimal length such that } \overline{v} = \vr_B(g)^{-1}g\}.
    \end{align*}
    We define the \emph{voracious language} $\V_B$ of $B$ to be
    \begin{align*}
        \V_B = \bigcup_{g \in W} \LL_g.
    \end{align*}
\end{defin}

Note that $\V_B$ is a \emph{geodesic} language since for each $g \in W$ the language $\LL_g$ consists of minimal length words representing $g$. Furthermore, $\LL_g$ is nonempty and finite for every $g \in W$, so $\V_B$ satisfies condition (\ref{surj L}) of the definition of biautomatic structure. Finally, we note that by Proposition \ref{vorOG}, the original voracious language $\V$ in \cite{OP-bi} is exactly $\V_L$, the voracious language associated to the low elements $L$.

\section{Regularity} \label{section regularity}

We are now ready to define the notion of a regular language and prove that $\V_B$ is regular when $B$ is a finite Garside shadow. 

\begin{defin}
    A \emph{finite state automaton over S} (FSA for short) is a finite directed graph $\Gamma$ together with the following data:
    \begin{enumerate}
        \item a vertex set $A$ and an edge set $E \subseteq A \times A$, where given $a_1,a_2 \in A$ we allow for at most one directed edge from $a_1$ to $a_2$ and at most one directed edge from $a_2$ to $a_1$;
        \item a map $\phi : E \to \mathcal{P}(S^*)$ (the power set of $S^*$) assigning to each edge in $E$ a finite set of labels;
        \item a vertex $a_0 \in A$ called the \emph{start state} of $\Gamma$;
        \item a subset $A_{\infty} \subseteq A$ called the \emph{accept states} of $\Gamma$.
    \end{enumerate}
    The language $\LL(\Gamma)$ of the FSA $\Gamma$ is the set of words $v \in S^*$ which can be decomposed into subwords $v = v_0\ldots v_m$ labelling a directed edge-path $(e_0,\ldots,e_m)$ in $\Gamma$ from $a_0$ to a vertex in $A_{\infty}$, that is $v_i \in \phi(e_i)$ for $i=0,\ldots,m$. A subset $\mathcal{L} \subseteq S^*$ is a \emph{regular language} if it is the language of some FSA $\Gamma$ over $S$.
\end{defin}

Now, we define a FSA $\A_B$ for the language $\V_B$. Since $\A_B$ will be based on $B$ and needs to have a finite number of states, we will require the Garside shadow $B$ to be finite. 

\begin{defin}
    We define the FSA $\A_B$ for $\V_B$ as follows. The vertex set of $\A_B$ is $B$, with start state $\id \in B$ and accept states $B$. The edges of $\A_B$ are defined as follows. Let $b,w \in B$ be such that $\pi_B(wb) = w$. Then, we put an edge $e$ in $E$ from $b$ to $w$ with $\phi(e)$ consisting of all minimal length words representing $w^{-1}$.
\end{defin}


\begin{prop}
If $B$ is finite then the language $\V_B$ is regular.
\end{prop}

\begin{proof}
    Let $v \in S^*$ have length $j \geq 0$. We prove by induction on $j$ that:
    \begin{itemize}
        \item $v \in \V_B$ if and only if $v \in \LL(\A_B)$;
        \item the accept state of $v \in \V_B$ is $\pi_B(g^{-1})$ where $\overline{v} = g$.
    \end{itemize}
    For $j=0$ this is true since the empty word is accepted by $\A_B$ with accept state $\id = \pi_B(\id)$. Let $n>0$ and assume the inductive hypothesis is true for all $j<n$. 

    Let $v \in \V_B$ have length $n$ and let $\overline{v} = g \in W$. Let $w = \pi_B(g^{-1})$ and $b = \pi_B((\vr_B(g))^{-1})$. Then, by the inductive hypothesis, $v(\ell(\vr_B(g)))$ is accepted by $\A_B$ with accept state $b$. Now, since $\vr_B(g)b \preceq \vr_B(g) \preceq g$, there is a geodesic $\gamma$ from $\id$ to $g$ in $X^1$ passing through $\vr_B(g)b$ and $\vr_B(g)$. See Figure \ref{f:regfig1} for an illustration. Then, translating $\gamma$ by $g^{-1}$, we find that $w \preceq wb \preceq g^{-1}$ which implies that $\pi_B(wb) = w$ by the definition of $\pi_B$ (since $\pi_B(g^{-1}) = w$). Hence, there is an edge $e$ from $b$ to $w$ in $\A_B$, and since $v(\ell(\vr_B(g))+1, n)$ is a minimal length word representing $w^{-1}$, it belongs to $\phi(e)$. Therefore, $v$ is accepted by $\A_B$.

    \begin{figure}[h!]
    \begin{center}
    \includegraphics[scale=1.1]{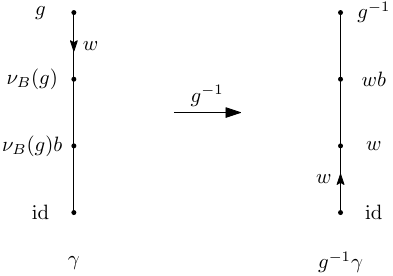}
    \end{center}
    \caption{Proof that $v$ is accepted by $\A_B$.}
    \label{f:regfig1}
    \end{figure}

    Now, let $v \in S^*$ of length $n$ be accepted by $\A_B$. Let $v=v_0\ldots v_m$ be a decomposition of $v$ into subwords respecting the definition of $\A_B$. By the inductive hypothesis, $v_0\ldots v_{m-1} \in \V_B$ and represents some element $g \in W$. Furthermore, the last edge $e_m$ labeled by $v_m$ starts at $b = \pi_B(g^{-1})$ and ends in the element $w$ of $B$ that is the inverse of $\overline{v}_m$. In particular, since $v$ is accepted by $\A_B$, we have that $\pi_B(wb) = w$. We will show that $\vr_B(gw^{-1}) = g$ thus implying that $v \in \V_B$. We first claim that $w^{-1} \in T_{g^{-1}, \id}$, that is that $w^{-1}$ belongs to the cone type of $g$. Since $\pi_B(wb) = w$, we have that $w \preceq wb$. Hence, translating by $w^{-1}$ a geodesic from $\id$ to $wb$ through $w$, we find that $w^{-1} \in T_{b,\id}$. Let $b' = \pi_{\Gamma}(g^{-1})$, that is the gate of the cone type part containing $g^{-1}$ by Lemma \ref{lem-cone}. Then, since $\Gamma \subseteq B$ by Proposition \ref{cone type garside}, we have that $b' \preceq b \preceq g^{-1}$ and thus $T_{g^{-1}, \id} = T_{b', \id} = T_{b, \id}$ by Lemma \ref{lem-convex}. This justifies the claim. Hence, there exists a geodesic $\gamma$ in $X^1$ from $g^{-1}$ to $w^{-1}$ passing through $b$ and $\id$. Then, translating $\gamma$ by $w$ we find that $w \preceq wb \preceq wg^{-1}$. Suppose for a contradiction that $\vr_B(gw^{-1}) \neq g$. Then, there exists $b' \in B$ such that $gw^{-1}b' \preceq g \preceq gw^{-1}$ and thus by Lemma \ref{v-ineq-lemma} we get $gb \preceq gw^{-1}b' \preceq g \preceq gw^{-1}$. But then, translating by $(gw^{-1})^{-1}$ we obtain $w \preceq b' \preceq wb $ (see Figure \ref{f:regfig2}), which contradicts the fact that $\pi_B(wb) = w$. Hence, we have that $\vr_B(gw^{-1}) = g$ and so $v \in \V_B$ with accept state $w = \pi_B((gw^{-1})^{-1})$.
\end{proof}

\begin{figure}[h!]
    \begin{center}
    \includegraphics[scale=1.1]{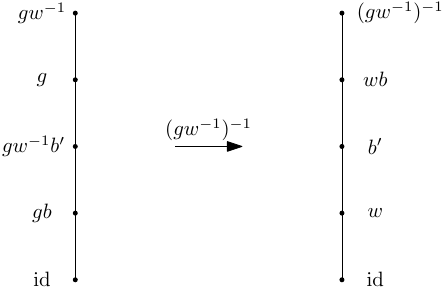}
    \end{center}
    \caption{Proof that $v$ belongs to $\V_B$.}
    \label{f:regfig2}
    \end{figure}

\begin{rem}
    Let $v \in S^*$ and $v = v_0\ldots v_m$ be a decomposition of $v$ into subwords where $g_i = \overline{v_i}$. If $g_0^{-1} \in B$ and $\vr_B(g_ig_{i+1}) = g_i$ for each $i = 0, \ldots, m-1$, then $v \in \V_B$. Indeed, the second condition gives us that $\pi_B(g_{i+1}^{-1}g_i^{-1}) = g_{i+1}^{-1} \in B$, so there is an edge $e$ in $\A_B$ from $g_i^{-1}$ to $g_{i+1}^{-1}$ with $v_{i+1} \in \phi(e)$. Hence, each normal form $v_0\ldots v_m$ can be recognized locally by checking that $g_0^{-1} \in B$ and that $v_iv_{i+1}$ is a normal form for each $i = 0, \ldots, m-1$.
\end{rem}

\section{Fellow traveller property} \label{section FTP}

In this section, we prove the first and second fellow traveller property for $\V_B$ when $B$ is a finite Garside shadow.

\begin{prop} \label{PropFTPa}
    If $B$ is finite then $\V_B$ satisfies the first fellow traveller property with $C = 2M$, where $M$ is the constant from Lemma \ref{constM}.
\end{prop}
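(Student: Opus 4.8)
The plan is to fellow‑travel the two words along their \emph{voracious landmarks}. First I would reduce to the case $\ell(g')=\ell(g)+1$, i.e.\ $g\prec g'=gs$: since $d$ is symmetric and $s^2=\id$, the other case is obtained by swapping the roles of $(v,g)$ and $(v',g')$ (note $g=g's$). Next I record the shape of a word in $\V_B$. By construction each $v\in\LL_g$ is a concatenation of minimal‑length subwords joining the landmarks $h_0=g\succeq h_1=\vr_B(g)\succeq h_2=\vr_B(h_1)\succeq\cdots\succeq\id$, which by Lemma~\ref{constM} satisfy $d(h_{k+1},h_k)\le M$. Hence $v$ is geodesic, $\overline{v(\ell(h_k))}=h_k$, and for $\ell(h_{k+1})\le i\le\ell(h_k)$ the prefix $\overline{v(i)}$ lies on a geodesic from $h_{k+1}$ to $h_k$, so that $h_{k+1}\preceq\overline{v(i)}\preceq h_k$ and $d(h_{k+1},\overline{v(i)})=i-\ell(h_{k+1})\le M$. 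I write $h'_k$ for the landmarks of $g'$ and use the analogous facts for $v'$.

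The crucial step is to show that $\vr_B(g')\preceq g$ \emph{always} holds in this situation. The unique wall separating $g$ from $g'=gs$ is $\W_0=g'\W_s$, where $\W_s$ is the wall of $s$, with $g$ on its $\id$‑side and $g'$ on the opposite side. Since $\vr_B(g')=g'\pi_B(g'^{-1})\preceq g'$, we have $\vr_B(g')\preceq g$ exactly when $\vr_B(g')$ lies on the $\id$‑side of $\W_0$, equivalently (translating by $g'^{-1}$) when $\W_s$ separates $\pi_B(g'^{-1})$ from $\id$, i.e.\ when $s\preceq\pi_B(g'^{-1})$. But $\ell(g'^{-1})=\ell(g^{-1})+1$ gives $s\preceq g'^{-1}$, and as $s\in S\subseteq B$ this forces $s\preceq\pi_B(g'^{-1})$. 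Thus $\vr_B(g')\preceq g$.

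With this in hand I would feed Lemma~\ref{v-ineq-lemma} into an induction interleaving the two landmark chains, proving $h_{k+1}\preceq h'_{k+1}\preceq h_k$ for all $k\ge0$. The base case $h_1\preceq h'_1\preceq h_0$ follows since $h'_1=\vr_B(g')\preceq g=h_0$ (the claim above) and, applying the lemma to $\vr_B(g')\preceq g\preceq g'$, $h_1=\vr_B(g)\preceq\vr_B(g')=h'_1$. The inductive step applies the lemma twice: to the chain $h_{k+1}\preceq h'_{k+1}\preceq h_k$ to obtain $h'_{k+2}\preceq h_{k+1}$, and then to $h'_{k+2}\preceq h_{k+1}\preceq h'_{k+1}$ to obtain $h_{k+2}\preceq h'_{k+2}$. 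In particular all the $h_k$ and $h'_k$ lie on a single $\preceq$‑chain descending to $\id$.

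To finish, fix $i$; the bound is clear when $i\ge\ell(g)$ since $d(g,g')=1\le 2M$, so assume $i\le\ell(g)$. Let $h_{k+1}$ and $h'_{k'+1}$ be the lower endpoints of the segments of $v$ and $v'$ containing word‑length $i$, and let $p$ be the $\preceq$‑smaller of the two (they are comparable, lying on one chain). Then $p$ is the lower endpoint of a segment of length at most $M$ containing $i$, so $i-\ell(p)\le M$; and by transitivity $p\preceq\overline{v(i)}$ and $p\preceq\overline{v'(i)}$, whence $d(\overline{v(i)},\overline{v'(i)})\le d(\overline{v(i)},p)+d(p,\overline{v'(i)})=2(i-\ell(p))\le 2M$. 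The main obstacle I anticipate is the key claim $\vr_B(g')\preceq g$: it rests on the wall identification $\W_0=g'\W_s$ and on the fact that the letter $s$ necessarily survives in $\pi_B(g'^{-1})$. Once that is secured, Lemma~\ref{v-ineq-lemma} delivers the interleaving mechanically and the remaining estimate is a single triangle inequality.
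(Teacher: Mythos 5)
Your proposal is correct and takes essentially the same route as the paper's proof: the key observation that $s\in B$ forces $s\preceq\pi_B$ of the inverse of the longer element (hence $\vr_B$ of the longer element lies below the shorter one), the interleaving of the two landmark chains by iterating Lemma~\ref{v-ineq-lemma}, and the final $2M$ bound via a triangle inequality through a common landmark using Lemma~\ref{constM} are exactly the paper's steps, just with the roles of $g$ and $g'$ swapped in the reduction. The only cosmetic difference is your wall-based derivation of the key claim $\vr_B(g')\preceq g$, which the paper gets directly from $s\preceq g^{-1}$ (in its orientation) without mentioning $\W_s$.
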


\begin{proof}
    The proof is essentially the same as to that of \cite[Cor~5.2]{OP-bi}. Let $g \in W$, $s \in S$ and $g' = gs$. Assume without loss of generality that $\ell(g') < \ell(g)$. Then, $s \preceq g^{-1}$ and so $s \preceq \pi_B(g^{-1}) \preceq g^{-1}$ by definition of $\pi_B$ since $s \in B$. Thus, we find that $\vr_B(g) \preceq g' \preceq g$ and we can iteratively apply Lemma \ref{v-ineq-lemma} to obtain
    \begin{align*}
        \id = \vr_B^n(g') \preceq \vr_B^n(g) \preceq \ldots \preceq \vr_B^2(g') \preceq \vr_B^2(g) \preceq \vr_B(g') \preceq \vr_B(g) \preceq g' \preceq g 
    \end{align*}
    for some $n \in \N$, where $\vr_B^k$ denotes the $k$-fold composition of $\vr_B$ (for convenience also define $\vr_B^0(g) = g$). Let $v,v' \in \V_B$ be words representing $g$ and $g'$ respectively. If $\ell(\vr_B^k(g')) \leq i \leq \ell(\vr_B^k(g))$ for some $k \geq 0$, then 
    \begin{align*}
        d(\overline{v(i)},\overline{v'(i)}) \leq d(\overline{v(i)},\vr_B^{k+1}(g)) + d(\vr_B^{k+1}(g),\overline{v'(i)}) \leq 2M
    \end{align*}
    by Lemma \ref{constM}. Otherwise, we have $\ell(\vr_B^{k+1}(g)) \leq i \leq \ell(\vr_B^k(g'))$ for some $k \geq 0$ and thus
    \begin{align*}
        d(\overline{v(i)},\overline{v'(i)}) \leq d(\overline{v(i)},\vr_B^{k+1}(g')) + d(\vr_B^{k+1}(g'),\overline{v'(i)}) \leq 2M
    \end{align*}
    by Lemma \ref{constM} again.
\end{proof} 

To prove the second fellow traveller property we will need the following result, which is a generalization of the  Parallel Wall Theorem proven by Brink and Howlett \cite[Thm 2.8]{BH-par}, and is a direct consequence of the fact that $\Sigma_m$ is finite for every $m \in \N$  \cite{Fu-E_m}.

\begin{thm} \label{Parallel wall}
    Let $W$ be a Coxeter group and $m \in \N$. There exists a constant $Q_m = Q(W,m)$ such that for each $g \in W$ and wall $\W$ at distance greater than $Q_m$ from $g$ in $X^1$, there are $m$ walls separating $g$ from $\W$.
\end{thm}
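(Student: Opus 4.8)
The plan is to reduce to the basepoint $\id$ and then invoke the finiteness of $\Sigma_{m-1}$. The key observation is that both the quantity $d(g,\W)$ and the relation ``$\W$ is $k$-close to $g$'' (meaning at most $k$ walls separate $g$ from $\W$) are invariant under the left action of $W$ on $X^1$: left-multiplication by $g^{-1}$ is a graph isometry carrying $g$ to $\id$ and $\W$ to the wall $g^{-1}\W$, and it induces a bijection on the set of walls preserving the separation relation. Consequently, for any $g \in W$ and wall $\W$, the number of walls separating $g$ from $\W$ equals the number separating $\id$ from $g^{-1}\W$, and $d(g,\W) = d(\id, g^{-1}\W)$.

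First I would recast the conclusion in contrapositive form. Saying that there are (at least) $m$ walls separating $g$ from $\W$ is exactly saying that $\W$ is \emph{not} $(m-1)$-close to $g$, which by the translation above is equivalent to $g^{-1}\W \notin \Sigma_{m-1}$. So it suffices to prove that if $g^{-1}\W \in \Sigma_{m-1}$ then $d(g,\W) = d(\id, g^{-1}\W) \leq Q_m$; that is, once we reduce to $g = \id$, we must bound the distance from $\id$ to an arbitrary $(m-1)$-elementary wall. This is where finiteness enters: each wall is the fixed-point set of a reflection, hence nonempty, so it lies at finite distance from $\id$ in the connected graph $X^1$. Since $\Sigma_{m-1}$ is finite by the result of Fu \cite{Fu-E_m}, the quantity
\begin{align*}
Q_m := \max\{\, d(\id, \W') : \W' \in \Sigma_{m-1} \,\}
\end{align*}
is a well-defined finite constant depending only on $W$ and $m$, and crucially not on $g$. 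With this choice, if $d(g,\W) > Q_m$ then $d(\id, g^{-1}\W) > Q_m$, so $g^{-1}\W \notin \Sigma_{m-1}$; hence $\W$ is not $(m-1)$-close to $g$, yielding the required $m$ separating walls. Taking $m=1$ recovers the original theorem with $Q_1 = \max_{\W' \in \Sigma} d(\id, \W')$, since $\Sigma_0 = \Sigma$.

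The only genuinely delicate point is the bookkeeping between the two notions in play---the distance $d(g,\W)$ on one hand, and the separating-wall count defining $m$-closeness (and hence $\Sigma_{m-1}$) on the other---together with the off-by-one in the index, namely $\Sigma_{m-1}$ rather than $\Sigma_m$, so that $m=1$ matches the Brink--Howlett case \cite{BH-par}. I expect no substantive obstacle: no quantitative comparison between $d(g,\W)$ and the number of separating walls is needed, because the finite set $\Sigma_{m-1}$ itself bridges the two notions, each of its finitely many elements sitting at a finite $d$-distance from $\id$. The remaining verifications---that distance and separation are $W$-invariant, and that an individual wall is at finite distance from the basepoint---are immediate.
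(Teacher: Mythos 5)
Your proof is correct and is essentially the paper's own argument: the paper gives no separate proof, presenting the theorem as a direct consequence of Fu's result that $\Sigma_m$ is finite, and your reduction to the basepoint via the isometric $W$-action on $X^1$ together with the choice $Q_m := \max\{d(\id,\W') : \W' \in \Sigma_{m-1}\}$ is exactly that consequence made explicit. The only loose end is the degenerate case $m=0$, where $\Sigma_{m-1}$ is undefined but the conclusion holds vacuously for any constant.
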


We can now finish the proof of Theorem \ref{mainthm} with the following.

\begin{prop} \label{propFTPb}
    If $B$ is finite then $\V_B$ satisfies the second fellow traveller property with $C' = 4M(M+Q_M)+2Q_M$, where $M$ is the constant from Lemma \ref{constM} and $Q_M$ is the constant from Theorem \ref{Parallel wall}.
\end{prop}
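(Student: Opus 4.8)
The plan is to count, for each $i$, the walls separating $s\overline{v(i)}$ from $\overline{v'(i)}$ and to show this number is at most $C'$. Write $x_i=\overline{v(i)}$ and $x_i'=\overline{v'(i)}$, and let $\gamma,\gamma'$ be the edge-paths in $X^1$ traced by $v,v'$; since $\V_B$ is geodesic, both are geodesics, so each crosses any given wall at most once. Assume $g'=sg$ with, say, $\ell(g')=\ell(g)+1$. The path $s\gamma$ runs from $s$ to $sg=g'$ and $\gamma'$ runs from $\id$ to $g'$, so both end at $g'$. A wall separates $sx_i$ from $x_i'$ precisely when it lies in the symmetric difference of the set of walls crossed by $s\gamma$ after parameter $i$ and the set crossed by $\gamma'$ after parameter $i$. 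Thus the whole problem reduces to showing that, from parameter $i$ onward, the two geodesics to the common endpoint $g'$ cross almost the same walls in almost the same order.

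First I would record the two inputs that make this possible. On the one hand, the walls crossed by $s\gamma$ and by $\gamma'$ on the entire journey to $g'$ agree except for the single wall $\W_s$ separating $\id$ from $s$: these are the walls separating $s$ from $g'$ and those separating $\id$ from $g'$, whose symmetric difference is exactly the walls separating $\id$ from $s$. This disposes of the ``asymmetric'' walls as an additive $O(1)$ term. On the other hand, Lemma \ref{constM} shows that each voracious segment has length at most $M$ and that the walls it crosses, after translating its top vertex $\vr_B^k(g)$ to $\id$, separate the element $\pi_B(\vr_B^k(g)^{-1})\in B\subseteq L_M$ from $\id$; hence they are $M$-elementary, i.e.\ $M$-close to $\vr_B^k(g)$. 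By the Parallel Wall Theorem (Theorem \ref{Parallel wall}) every such wall lies within distance $Q_M$ of $\vr_B^k(g)$, and the same holds along $s\gamma$ about the vertices $s\vr_B^k(g)$ and along $\gamma'$ about the vertices $\vr_B^k(g')$. This is the key geometric input: it localises every wall that either path crosses to a $Q_M$-neighbourhood of the relevant voracious vertex.

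Next I would set up the synchronisation at the level of voracious vertices. The one-step estimate is exact and cheap: since $\vr_B(sg)=sg\,\pi_B(g^{-1}s)$ and $s\vr_B(g)=sg\,\pi_B(g^{-1})$, we get $d(\vr_B(sg),s\vr_B(g))=d(\pi_B(g^{-1}s),\pi_B(g^{-1}))\le 2M$ because both factors lie in $B$. The difficulty is that iterating this does not keep the matched vertices $s\vr_B^k(g)$ and $\vr_B^k(g')$ uniformly close: unlike in Proposition \ref{PropFTPa}, left multiplication by $s$ does not respect $\preceq$, so Lemma \ref{v-ineq-lemma} cannot be used to interleave the two voracious sequences, and a naive triangle inequality leaks an extra $2M$ at every step. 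I would therefore not try to bound the drift of matched vertices, but instead bound the pointwise discrepancy directly: reading backward from the common endpoint $g'$, both paths must first cross the walls that are $M$-close to $g'$, then those $M$-close to the next voracious vertices, and so on; at each such scale the two batches are two subsets of the finitely many walls within $Q_M$ of a common neighbourhood, so a wall crossed out of sync by the two paths at parameter $i$ is forced by Theorem \ref{Parallel wall} to be crossed by both within $O\bigl(M(M+Q_M)\bigr)$ of $i$. Since each geodesic crosses distinct walls at distinct parameters, only $O\bigl(M(M+Q_M)\bigr)$ walls can be out of sync at any fixed $i$, and tracking the constants through this count produces $C'=4M(M+Q_M)+2Q_M$.

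The hard part will be exactly this synchronisation step, namely proving that the greedy ``closest walls first (from the endpoint)'' structure of the voracious path is stable under the left shift by $s$: one must show that the last voracious segments of $s\gamma$ and of $\gamma'$ exhaust the \emph{same} collection of near-$g'$ walls up to a bounded remainder, and then recurse on the truncated paths. This is where all the hypotheses are used together --- finiteness of $B$ (through $M$), the Garside closure properties (through Lemma \ref{constM} and the fact that the segments cross $M$-elementary walls), and the finiteness of $\Sigma_M$ repackaged metrically by Theorem \ref{Parallel wall}. Once the synchronisation is in place, assembling the bound for every $i$ (and the symmetric case $\ell(g')=\ell(g)-1$) is routine, completing the proof of Theorem \ref{mainthm}.
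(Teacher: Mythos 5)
Your setup is sound as far as it goes: the reduction of $d(s\overline{v(i)},\overline{v'(i)})$ to counting the symmetric difference of walls crossed after parameter $i$, the observation that the two full wall sets differ only in $\W_s$, the localisation of the walls crossed by each voracious segment to a $Q_M$-neighbourhood of that segment's top vertex, and the one-step bound $d(s\vr_B(g),\vr_B(sg))=d(\pi_B(g^{-1}),\pi_B(g^{-1}s))\le 2M$ are all correct. But the proof stops exactly where it would have to start: the ``synchronisation step'' is asserted, not proven, and you say yourself it is the hard part. Worse, the sketch you give for it is circular. You localise the walls crossed by each path near that path's \emph{own} voracious vertices; to conclude that a wall crossed by both paths is crossed at nearby parameters, you would need the matched vertices $s\vr_B^k(g)$ and $\vr_B^k(sg)$ to stay uniformly close --- which is precisely the drift problem you concede cannot be handled by iterating the one-step bound. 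Theorem \ref{Parallel wall} cannot supply this on its own: it says nothing about how the two sequences of voracious vertices relate to each other, so no bound independent of $\ell(g)$ comes out of your framework.

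What is missing are the two ideas the paper's proof actually runs on, neither of which appears in your proposal. First, induction on $\ell(g)$ together with a dichotomy on whether $\pi_B(g^{-1})=\pi_B((sg)^{-1})$. If the projections are equal, then $s\vr_B(g)=\vr_B(sg)$ \emph{exactly} (not merely within $2M$), so the two words coincide as group elements at their first breakpoints and the inductive hypothesis disposes of all $i<\ell(\vr_B(sg))$; no synchronisation of walls is ever needed in this case. Second, if the projections differ, then Lemma \ref{refinement} combined with $B\subseteq L_M$ (Lemma \ref{constM}\,(\ref{contM-1})) forces $\W_s$ to be $M$-close to $sg$: otherwise $(sg)^{-1}\W_s$ would not be $M$-elementary, so $g^{-1}$ and $(sg)^{-1}$ would lie in the same $M$-Shi part and hence have equal $B$-projections, a contradiction. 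This is the step that kills the drift: once $\W_s$ is $M$-close to $sg$ it is $M$-close to every $s\overline{v(i)}$, so Theorem \ref{Parallel wall} gives the \emph{uniform} bound $d(\overline{v(i)},s\overline{v(i)})\le 2Q_M$ for all $i$; in particular $d(\vr_B(g),\vr_B(sg))\le 2(M+Q_M)$, and then Proposition \ref{PropFTPa}, applied along a chain of at most $2(M+Q_M)$ words of $\V_B$ interpolating between $\vr_B(g)$ and $\vr_B(sg)$, yields $d(\overline{v(i)},\overline{v'(i)})\le 4M(M+Q_M)$ and hence the stated $C'$. Your proposal would need to rediscover both the dichotomy and the $M$-closeness of $\W_s$ to close the gap; as written, it is a correct reformulation of the problem followed by an unproven claim equivalent to it.
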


\begin{proof}
    Let $g \in W$ and $s \in S$. We will prove this by induction on $\ell(g)$. Assume without loss of generality that $\ell(sg)>\ell(g)$. For $g = \id$ there is nothing to prove, so suppose $g \neq \id$. Let $v,v' \in \V_B$ be words representing $g$ and $sg$ respectively. 
    
    Suppose first that $\pi_B(g^{-1}) = \pi_B((sg)^{-1})$. Then,
    \begin{align*}
        s\vr_B(g) = sg\pi_B(g^{-1}) = sg\pi_B((sg)^{-1}) = \vr_B(sg)
    \end{align*}
    so $v'(\ell(\vr_B(sg)))$ and $sv(\ell(\vr_B(g)))$ represent the same element of $W$. Hence, the second fellow traveller property is satisfied by induction for $i < \ell(\vr_B(sg))$ and by Lemma \ref{constM} for $i \geq \ell(\vr_B(sg))$.

    Now, assume that $\pi_B(g^{-1}) \neq \pi_B((sg)^{-1})$. Consider the wall $\W_s$ separating $\id$ from $s$. Since $\vr_B(g) \preceq g$ we have that $s \preceq s\vr_B(g) \preceq sg$, so $s\vr_B(g)$ lies on the same side of $\W_s$ as $s$. If this is also the case for $\vr_B(sg)$, then $s \preceq \vr_B(sg)$ since we can reflect through $\W_s$ the part of the geodesic from $\id$ to $\vr_B(sg)$ in $X^1$ lying on the same side of $\W_s$ as $\id$. We thus have a geodesic $\gamma$ from $\id$ to $sg$ in $X^1$ passing through $s$ and $\vr_B(sg)$, but then translating $\gamma$ by $(sg)^{-1}$ we find that $\pi_B((sg)^{-1}) \preceq g^{-1} \preceq (sg)^{-1}$, which implies that $\pi_B(g^{-1}) = \pi_B((sg)^{-1})$ by definition of $\pi_B$, giving a contradiction. Hence, $\vr_B(sg)$ lies on the same side of $\W_s$ as $\id$. See Figure \ref{f:ftpfig} for an illustration.

    \begin{figure}[h]
    \begin{center}
    \includegraphics[scale=1.1]{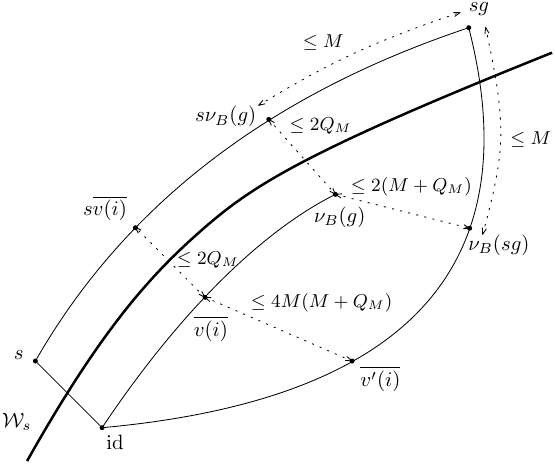}
    \end{center}
    \caption{Proof of Proposition \ref{propFTPb}.}
    \label{f:ftpfig}
    \end{figure}
    
    Finally, we claim that the wall $\W_s$ is $M$-close to $sg$. Assume for a contradiction that it is not. Then, $(sg)^{-1}\W_s$ is not $M$-elementary, and since it separates $g^{-1}$ from $(sg)^{-1}$, we must have that $\pi_{L_M}(g^{-1}) = \pi_{L_M}((sg)^{-1})$ as a consequence of Lemma \ref{lem-mShi}. But since $B \subseteq L_M$ by Lemma \ref{constM}, we have by Lemma \ref{refinement} that $\pi_B(g^{-1}) = \pi_B((sg)^{-1})$ which is a contradiction. Hence, $\W_s$ is $M$-close to $sg$. 

    Let $1 \leq i \leq \ell(v)$ and $\overline{v(i)} = h \in W$ so that $s \preceq sh \preceq sg$. Then, $\W_s$ is also $M$-close to $sh$ since any wall separating $\W_s$ from $sh$ also separates $\W_s$ from $sg$ (otherwise it intersects the geodesic $sv$ twice). Hence, by Theorem \ref{Parallel wall} we have that $d(sh, h) \leq 2Q_M$ and in particular $d(s\vr_B(g), \vr_B(g)) \leq 2Q_M$. Therefore,
    \begin{align*}
        d(sg, \vr_B(g)) \leq d(sg, s\vr_B(g)) + d(s\vr_B(g), \vr_B(g)) \leq M + 2Q_M
    \end{align*}
    and thus $d(\vr_B(g), \vr_B(sg)) \leq 2(M+Q_M)$. Hence, by applying Proposition \ref{PropFTPa} repeatedly we have for $i < \ell(\vr_B(sg))$ that $d(\overline{v(i)}, \overline{v'(i)}) \leq 4M(M+Q_M)$ and thus
    \begin{align*}
        d(s\overline{v(i)}, \overline{v'(i)}) \leq 4M(M+Q_M) + 2Q_M.
    \end{align*}
    Finally, since $d(s\vr_B(g), \vr_B(sg)) \leq 2M$, we have that $d(s\overline{v(i)}, \overline{v'(i)}) \leq 2M$ for all $i \geq \ell(\vr_B(sg))$.
\end{proof}





\bibliography{refs}{}
\bibliographystyle{plain}

\end{document}